\documentclass[12pt,a4paper]{article}

\usepackage[utf8]{inputenc}
\usepackage[T1]{fontenc}
\usepackage[english]{babel}
\usepackage{amsmath, amssymb, amsthm}
\usepackage{algorithm}
\usepackage{algorithmic}
\usepackage[round,authoryear]{natbib}
\usepackage{placeins}
\usepackage{graphicx}
\usepackage{hyperref}
\usepackage{bm}
\usepackage{geometry}
\usepackage{algorithm}
\usepackage{algorithmic}
\usepackage{listings}
\newtheorem{proposition}{Proposition}[section]
\newtheorem{corollary}{Corollary}[section]

\title{Curvature--Residual Geometry in Bregman Regression}
\author{Ky Vu \\ Mathematics Department, FPT University \\ \texttt{kyvk2@fe.edu.vn}}

\begin{document}

\maketitle

\begin{abstract}
We consider linear regression models fitted by minimizing Bregman losses of the form
\[
\frac{1}{n}\sum_{i=1}^n
\left[
\phi(y_i)-\phi(x_i^\top\theta)
-\phi'(x_i^\top\theta)
\bigl(y_i-x_i^\top\theta\bigr)
\right],
\]
where \(y_i\) is the observed response and \(x_i^\top\theta\) is the linear prediction. Even when the generating potential \(\phi\) is strongly convex, the resulting regression objective may be nonconvex in \(\theta\). This creates a gap between the convexity of the generating potential and the optimization geometry of the fitted model.

The Hessian can be written as a weighted Gram matrix whose weights depend on the derivatives of the potential and the current residuals. This representation gives simple conditions for local strong convexity, smoothness, and conditional linear convergence of gradient descent.

For the quadratic--quartic potential, we derive an exact scalar convexity condition, identify the interval of negative curvature, and obtain local and global sufficient conditions for positive curvature. Numerical experiments indicate that these scalar conditions may fail while the full Hessian remains positive definite at the evaluated points. They also indicate that the range of tested gradient-descent step sizes leading to convergence decreases as the quartic parameter grows.

These results characterize how residual-dependent curvature interacts with the generating potential and the design matrix in reverse Bregman regression.
\end{abstract}
\section{Introduction}
\label{sec:introduction}

\subsection{How Convex Bregman Losses Become Nonconvex}
\label{subsec:curvature-mechanism}

Let \(\phi\in C^3(I)\) be a strictly convex function on an open interval \(I\subseteq\mathbb{R}\). The Bregman divergence generated by \(\phi\) is
\[
D_\phi(y\|m)
=
\phi(y)-\phi(m)-\phi'(m)(y-m).
\]
Given observations \((x_i,y_i)_{i=1}^n\), with \(x_i\in\mathbb{R}^d\) and \(y_i\in I\), we study the regression objective
\[
L(\theta)
=
\frac{1}{n}\sum_{i=1}^n
D_\phi(y_i\|x_i^\top\theta)
\]
on the parameter domain
\[
\Theta
=
\left\{
\theta\in\mathbb{R}^d:
x_i^\top\theta\in I
\text{ for every }i
\right\}.
\]

The quadratic choice \(\phi(u)=u^2/2\) recovers ordinary least squares. For a general potential, however, the objective may be nonconvex in \(\theta\), even when \(\phi\) is uniformly strongly convex and the prediction map is linear. The source of this nonconvexity is a residual-dependent term in the curvature of the loss.

To see this, let \(m_i=x_i^\top\theta\) and \(r_i=y_i-m_i\). Direct differentiation gives
\[
\nabla L(\theta)
=
-\frac{1}{n}
\sum_{i=1}^n
\phi''(m_i)r_i x_i,
\]
and
\[
\nabla^2L(\theta)
=
\frac{1}{n}
\sum_{i=1}^n
\left[
\phi''(m_i)-\phi'''(m_i)r_i
\right]
x_ix_i^\top.
\]

Thus the Hessian is a weighted sum of the rank-one matrices \(x_ix_i^\top\), with weights
\[
w_i(\theta)
=
\phi''(m_i)-\phi'''(m_i)r_i.
\]
Under uniform strong convexity, the first term is positive, but the residual-dependent term has no fixed sign. It may reduce the local curvature or make an individual weight negative. Therefore, strong convexity of \(\phi\) alone does not imply convexity of the regression objective.

If the weights remain uniformly positive and bounded on a convex region \(\mathcal D\subseteq\Theta\), namely,
\[
0<\mu\le w_i(\theta)\le M
\]
for every \(i\) and every \(\theta\in\mathcal D\), then the curvature of \(L\) is controlled by that of the least-squares problem:
\[
\mu\frac{X^\top X}{n}
\preceq
\nabla^2L(\theta)
\preceq
M\frac{X^\top X}{n},
\qquad
\theta\in\mathcal D.
\]
Hence, when \(X\) has full column rank, \(L\) is strongly convex and smooth on \(\mathcal D\). Gradient descent then converges linearly, provided that the iterates remain in this region. If \(X\) is rank deficient, the same statements hold after restricting the parameter space to \(\ker(X)^\perp\).

To make the general curvature mechanism explicit, we next consider a simple nonquadratic perturbation of least squares. The quartic family
\[
\phi_{\lambda,\alpha}(u)
=
\frac{\lambda}{2}u^2+\frac{\alpha}{4}u^4,
\qquad
\lambda>0,\quad \alpha\ge0,
\]
is particularly useful for this purpose. When \(\alpha=0\), it reduces to the quadratic potential and hence to ordinary least squares. For \(\alpha>0\), the potential remains uniformly strongly convex, but its curvature is no longer constant. It therefore provides a tractable example in which the interaction between potential curvature and residuals can create nonconvexity while still allowing an exact analysis.

The numerical value of \(\alpha\) depends on the scale of the response. Under the rescaling \(y,m\mapsto cy,cm\), an equivalent parametrization with \(\lambda\) fixed requires \(\alpha\mapsto\alpha/c^2\). In the real-data experiments below, the responses are therefore standardized using training-set statistics before fitting.

Indeed,
\[
\phi_{\lambda,\alpha}''(u)
=
\lambda+3\alpha u^2
\ge\lambda,
\]
yet the corresponding regression objective need not be convex. Its Hessian weights are
\[
w_i(\theta)
=
\lambda+3\alpha m_i^2-6\alpha m_ir_i
=
\lambda+3\alpha(m_i-r_i)^2-3\alpha r_i^2.
\]
Hence
\[
w_i(\theta)\ge\lambda-3\alpha r_i^2.
\]
For \(\alpha>0\), the condition
\[
\|r(\theta)\|_\infty
<
\sqrt{\frac{\lambda}{3\alpha}}
\]
is sufficient for all the weights to remain positive.

The residual-based condition above is local and only sufficient. A different rearrangement gives an exact scalar characterization. Using \(r_i=y_i-m_i\), we obtain
\[
w_i(\theta)
=
\lambda-\alpha y_i^2
+
9\alpha\left(m_i-\frac{y_i}{3}\right)^2.
\]
For a fixed response \(y_i\), the map
\[
m\mapsto D_{\phi_{\lambda,\alpha}}(y_i\|m)
\]
is convex on \(\mathbb{R}\) if and only if \(\alpha y_i^2\le\lambda\). When this condition fails, the scalar loss has an explicit interval of negative curvature. Applying the condition to every observation gives the sufficient global bound
\[
\alpha\|y\|_\infty^2<\lambda,
\]
under which the regression objective is globally strongly convex when \(X\) has full column rank, and globally strongly convex on \(\ker(X)^\perp\) otherwise. Failure of this condition does not imply that the full Hessian is indefinite, since positive contributions from other observations may compensate for negative scalar weights.

As \(\alpha\) increases, the objective may become poorly conditioned or develop negative curvature. We therefore examine continuation from \(\alpha=0\), using the stationary point computed at one stage to initialize the next. The implicit function theorem gives only a local justification, so continuation is treated here as a numerical warm-start strategy rather than as a globally convergent method.

\subsection{Related work}
\label{subsec:related-work}

Bregman divergences were introduced in connection with projection methods for convex programming and are now standard tools in convex analysis and optimization \citep{Bregman1967,BauschkeBorwein1997}. Bregman projections satisfy generalized Pythagorean relations, while Legendre duality leads to the dually flat structures studied in information geometry
\citep{BauschkeBorwein1997,AmariNagaoka2000,Amari2010}. They also form the geometric basis of mirror descent and related first-order methods \citep{NemirovskiYudin1983,BeckTeboulle2003}.

Bregman divergences also appear as prediction losses and distortion measures in statistical learning. Expected Bregman loss of the form \(D_\phi(Y\|m)\) is minimized by the conditional mean under standard integrability assumptions, and Bregman divergences are closely connected with clustering, exponential-family models, proper scoring rules, and statistical risk
\citep{BanerjeeGuoWang2005,BanerjeeMeruguDhillonGhosh2005,GneitingRaftery2007,ReidWilliamson2011}.

The order of the arguments is important for convexity. A Bregman divergence is convex in its first argument, but need not be convex in its second argument. Conditions for second-argument convexity have been studied in connection with proper composite losses and divergence inequalities \citep{ReidWilliamson2010,PainskyWornell2019}. These works emphasize that second-argument convexity imposes additional restrictions on the generating function. The present paper instead studies directly how failure of this convexity appears in a linear regression objective through residual-dependent Hessian weights.

Questions of curvature and loss of convexity also arise in several related model classes. In single-neuron models, the interaction between the loss and the transfer function may produce difficult nonconvex landscapes
\citep{HelmboldKivinenWarmuth1995,KivinenHelmboldWarmuth1999, AuerHerbsterWarmuth1995}. In generalized linear models, noncanonical links introduce additional curvature through the nonlinear mean parameterization \citep{NelderWedderburn1972,McCullaghNelder1989}. Relative smoothness and Bregman first-order methods provide a framework for objectives whose gradients are not globally Lipschitz \citep{BauschkeBolteTeboulle2017,LuFreundNesterov2018,
BolteSabachTeboulleVaisbourd2018}.

The setting considered here differs in the source of nonconvexity while retaining a linear prediction map. The prediction \(x_i^\top\theta\) is linear, but the curvature depends on the residual because the prediction enters the second argument of the Bregman divergence. Our focus is the resulting weighted-Gram Hessian and the local optimization geometry of the regression objective. We also use continuation in the quartic parameter as a numerical warm-start strategy, following the general idea of homotopy methods for parameterized nonconvex problems \citep{SuzumuraEtAl2017}.

\subsection{Main contributions}

The main contributions of the paper are as follows.

\begin{enumerate}
    \item We identify the residual-dependent curvature mechanism in
    reverse Bregman regression. The Hessian has a weighted-Gram form,
    with scalar weights determined jointly by the derivatives of the
    potential and the current residuals. Uniform bounds on these weights
    yield local strong-convexity and smoothness estimates, together with
    a conditional linear convergence guarantee for gradient descent
    when the iterates remain in the certified region.

    \item For the quadratic-quartic potential, we derive a
    residual-based sufficient condition for local positive curvature and
    an exact scalar curvature characterization. The latter identifies an
    explicit interval of negative curvature when
    \(\alpha y_i^2>\lambda\), and yields the sufficient condition
    \(
    \alpha\|y\|_\infty^2<\lambda
    \)
    for global strong convexity when \(X\) has full column rank, and on
    \(\ker(X)^\perp\) otherwise.

    \item Numerical experiments compare the scalar curvature conditions
    with the spectrum of the full Hessian. In the tested instances, the
    sufficient conditions may fail while the Hessian at the computed
    approximately stationary point remains positive definite. The
    experiments also indicate that the range of tested gradient-descent
    step sizes leading to convergence decreases as the quartic parameter
    increases.

    \item We also examine continuation from the least-squares problem at
    \(\alpha=0\) as a numerical warm-start strategy. A local
    implicit-function argument describes the continuation of
    nondegenerate stationary points under small changes in \(\alpha\).
    In the tested synthetic and real-data settings, continuation gives
    diagnostics similar to direct optimization but requires more
    iterations and function evaluations.
\end{enumerate}

\section{General curvature bounds and local convergence}
\label{sec:general-curvature}

We now formalize the curvature mechanism described in the introduction. Let \(I\subseteq\mathbb{R}\) be an open interval, let \(\phi\in C^3(I)\) be strictly convex, and suppose that \(y_i\in I\) for every \(i\). Define
\[
\Theta
=
\left\{
\theta\in\mathbb{R}^d:
x_i^\top\theta\in I
\text{ for every }i
\right\},
\]
and consider
\[
L(\theta)
=
\frac1n\sum_{i=1}^n
D_\phi\bigl(y_i\|x_i^\top\theta\bigr),
\qquad
\theta\in\Theta.
\]

Write
\[
m_i(\theta)=x_i^\top\theta,
\qquad
r_i(\theta)=y_i-m_i(\theta),
\]
and let \(X\in\mathbb{R}^{n\times d}\) denote the design matrix with rows \(x_i^\top\). As noted in Section~\ref{subsec:curvature-mechanism},
\[
\nabla L(\theta)
=
-\frac1n\sum_{i=1}^n
\phi''\bigl(m_i(\theta)\bigr)r_i(\theta)x_i,
\qquad
\nabla^2L(\theta)
=
\frac1nX^\top W(\theta)X,
\]
where
\[
W(\theta)
=
\operatorname{diag}
\bigl(
w_1(\theta),\ldots,w_n(\theta)
\bigr),
\qquad
w_i(\theta)
=
\phi''\bigl(m_i(\theta)\bigr)
-
\phi'''\bigl(m_i(\theta)\bigr)r_i(\theta).
\]
Thus the Hessian is a weighted Gram matrix whose scalar weights depend jointly on the potential and the current residuals. Throughout this section, \(\mathcal D\subseteq\Theta\) is a convex set.

Suppose that, for some \(0<\mu\le M<\infty\),
\[
\mu\le w_i(\theta)\le M,
\qquad
i=1,\ldots,n,\quad \theta\in\mathcal D.
\]
Then
\[
\mu\frac{X^\top X}{n}
\preceq
\nabla^2L(\theta)
\preceq
M\frac{X^\top X}{n},
\qquad
\theta\in\mathcal D.
\]
If \(X\) has full column rank, set
\[
\mu_X
=
\mu\lambda_{\min}\left(\frac{X^\top X}{n}\right),
\qquad
M_X
=
M\lambda_{\max}\left(\frac{X^\top X}{n}\right).
\]
Then \(L\) is \(\mu_X\)-strongly convex and \(M_X\)-smooth on \(\mathcal D\), with
\[
\frac{M_X}{\mu_X}
=
\frac{M}{\mu}
\kappa\left(\frac{X^\top X}{n}\right).
\]
If \(X\) is rank deficient, the same conclusions hold on \(\ker(X)^\perp\), since \(L\) is constant along \(\ker(X)\).

A sufficient condition for these weight bounds follows from bounds on the derivatives of the potential and the residuals. Suppose that \(m_i(\theta)\in J\subseteq I\) for every \(i\) and \(\theta\in\mathcal D\), and that
\[
0<c\le\phi''(u)\le C,
\qquad
|\phi'''(u)|\le\Gamma,
\qquad
u\in J.
\]
If
\[
|r_i(\theta)|\le\rho,
\qquad
i=1,\ldots,n,\quad \theta\in\mathcal D,
\]
then
\[
c-\Gamma\rho
\le
w_i(\theta)
\le
C+\Gamma\rho,
\]
and hence
\[
(c-\Gamma\rho)\frac{X^\top X}{n}
\preceq
\nabla^2L(\theta)
\preceq
(C+\Gamma\rho)\frac{X^\top X}{n}.
\]
In particular, if \(\Gamma>0\) and
\(
\rho<\frac{c}{\Gamma},
\)
the Hessian is positive definite when \(X\) has full column rank, and positive definite on \(\ker(X)^\perp\) otherwise. This condition is sufficient but not necessary, since \(X^\top W(\theta)X\) may remain positive semidefinite even when some scalar weights are negative.

The preceding bounds give the following conditional convergence result.

\begin{corollary}
\label{thm:gradient-descent-convergence}
Assume that \(X\) has full column rank and that the weight bounds above hold on the convex set \(\mathcal D\). Let
\(\theta^\star\in\mathcal D\) satisfy
\[
\nabla L(\theta^\star)=0,
\]
and let
\[
\theta_{t+1}
=
\theta_t-\eta\nabla L(\theta_t),
\qquad
0<\eta\le\frac1{M_X}.
\]
If the iterates remain in \(\mathcal D\), then
\[
L(\theta_t)-L(\theta^\star)
\le
(1-\eta\mu_X)^t
\bigl[
L(\theta_0)-L(\theta^\star)
\bigr].
\]
Moreover, \(\theta^\star\) is the unique minimizer of \(L\) on \(\mathcal D\).
\end{corollary}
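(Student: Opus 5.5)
The plan is to derive the statement from the standard descent-lemma/Polyak--Łojasiewicz argument, with every Taylor expansion kept inside the convex set \(\mathcal D\). From the weight bounds and the discussion preceding the corollary, we have \(\mu_X I\preceq\nabla^2 L(\theta)\preceq M_X I\) for all \(\theta\in\mathcal D\). Since \(\mathcal D\) is convex, for any \(\theta,\theta'\in\mathcal D\) the segment joining them lies in \(\mathcal D\subseteq\Theta\). Taylor's theorem with integral remainder along that segment then gives
\[
L(\theta')\le L(\theta)+\nabla L(\theta)^\top(\theta'-\theta)+\tfrac{M_X}{2}\|\theta'-\theta\|^2
\]
and
\[
L(\theta')\ge L(\theta)+\nabla L(\theta)^\top(\theta'-\theta)+\tfrac{\mu_X}{2}\|\theta'-\theta\|^2 .
\]

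First, I would prove uniqueness. Applying the lower bound with \(\theta=\theta^\star\) and \(\nabla L(\theta^\star)=0\) gives \(L(\theta')\ge L(\theta^\star)+\tfrac{\mu_X}{2}\|\theta'-\theta^\star\|^2\) for all \(\theta'\in\mathcal D\). Hence \(\theta^\star\) is the unique minimizer of \(L\) on \(\mathcal D\).

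Second, I would establish a PL inequality on \(\mathcal D\). From the lower bound at \(\theta\) with \(\theta'=\theta^\star\),
\[
L(\theta^\star)\ge L(\theta)+\nabla L(\theta)^\top(\theta^\star-\theta)+\tfrac{\mu_X}{2}\|\theta^\star-\theta\|^2\ge L(\theta)-\tfrac{1}{2\mu_X}\|\nabla L(\theta)\|^2 ,
\]
where the last step minimizes the quadratic in \(\theta^\star-\theta\) over all of \(\mathbb R^d\). This yields \(\|\nabla L(\theta)\|^2\ge 2\mu_X\,(L(\theta)-L(\theta^\star))\) for \(\theta\in\mathcal D\). Only the point \(\theta^\star\in\mathcal D\) is used here, so the unconstrained minimization causes no problem.

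Third, I would prove one-step descent. Both \(\theta_t\) and \(\theta_{t+1}\) lie in \(\mathcal D\) by hypothesis, so the upper bound applies with \(\theta'=\theta_{t+1}=\theta_t-\eta\nabla L(\theta_t)\). It gives
\[
L(\theta_{t+1})\le L(\theta_t)-\eta\bigl(1-\tfrac{\eta M_X}{2}\bigr)\|\nabla L(\theta_t)\|^2\le L(\theta_t)-\tfrac{\eta}{2}\|\nabla L(\theta_t)\|^2 ,
\]
using \(\eta\le 1/M_X\). Combining with the PL inequality gives
\[
L(\theta_{t+1})-L(\theta^\star)\le(1-\eta\mu_X)\bigl(L(\theta_t)-L(\theta^\star)\bigr),
\]
and induction on \(t\) yields the claimed bound. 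The factor satisfies \(0\le 1-\eta\mu_X<1\) because \(\eta\mu_X\le\mu_X/M_X\le1\).

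The main subtlety is the domain restriction. The curvature bounds hold only on \(\mathcal D\), so each Taylor expansion must be taken along a segment contained in \(\mathcal D\). This is why convexity of \(\mathcal D\) and the assumption that consecutive iterates stay in \(\mathcal D\) are both needed. Beyond that, the argument is routine. Smoothness of \(L\) on the open set \(\Theta\) follows from \(\phi\in C^3(I)\).
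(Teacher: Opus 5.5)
Your proposal is correct and follows the same route as the paper, whose proof simply invokes the $\mu_X$-strong convexity and $M_X$-smoothness of $L$ on $\mathcal D$ together with the standard gradient-descent estimate. You have written out that estimate in full: the descent lemma combined with the Polyak--\L{}ojasiewicz inequality obtained from strong convexity, with every Taylor expansion taken along a segment inside the convex set $\mathcal D$, and with the check $0\le 1-\eta\mu_X<1$ needed for the induction.
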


\begin{proof}
The result follows from the \(\mu_X\)-strong convexity and \(M_X\)-smoothness of \(L\) on \(\mathcal D\), together with the
standard gradient-descent estimate.
\end{proof}

With \(\eta=1/M_X\), the convergence factor is
\(
1-\frac{\mu_X}{M_X}.
\)
When the weight bounds hold only on a restricted region, the result is conditional on the gradient-descent trajectory remaining in that region; no invariance of \(\mathcal D\) is asserted.

The analysis therefore reduces local curvature and conditioning to bounds on the scalar weights \(w_i(\theta)\). The next section derives more explicit conditions for the quadratic--quartic potential.

\section{Quartic Bregman Regression}
\label{sec:quartic}

To make the curvature mechanism explicit, we consider the quadratic--quartic family as a simple nonquadratic perturbation of least squares. It preserves uniform strong convexity of the generating potential, while allowing the residual-dependent term in the Hessian to create negative curvature. At the same time, its polynomial structure permits an exact scalar analysis.

We therefore specialize to
\[
\phi_{\lambda,\alpha}(u)
=
\frac{\lambda}{2}u^2+\frac{\alpha}{4}u^4,
\qquad
\lambda>0,\quad \alpha\ge0,
\]
and write \(L_\alpha\) for the corresponding regression objective. Its derivatives are
\[
\phi_{\lambda,\alpha}'(u)=\lambda u+\alpha u^3,
\qquad
\phi_{\lambda,\alpha}''(u)=\lambda+3\alpha u^2,
\qquad
\phi_{\lambda,\alpha}'''(u)=6\alpha u.
\]
Hence we have
\(
\phi_{\lambda,\alpha}''(u)\ge\lambda,
\)
which implies that the potential itself is uniformly strongly convex. The scalar divergence and empirical objective are
\[
D_{\phi_{\lambda,\alpha}}(y\|m)
=
\frac{\lambda}{2}(y-m)^2
+
\frac{\alpha}{4}
\left(y^4-4m^3y+3m^4\right)
\]
and
\[
L_\alpha(\theta)
=
\frac{\lambda}{2n}\|y-X\theta\|_2^2
+
\frac{\alpha}{4n}
\sum_{i=1}^n
\left(
y_i^4-4m_i(\theta)^3y_i+3m_i(\theta)^4
\right).
\]
The gradient and Hessian are
\[
\nabla L_\alpha(\theta)
=
-\frac1n\sum_{i=1}^n
\left(
\lambda+3\alpha m_i(\theta)^2
\right)
r_i(\theta)x_i
\]
and
\[
\nabla^2L_\alpha(\theta)
=
\frac1n\sum_{i=1}^n
w_i(\theta;\alpha)x_ix_i^\top,
\]
where
\[
w_i(\theta;\alpha)
=
\lambda+3\alpha m_i(\theta)^2
-6\alpha m_i(\theta)r_i(\theta).
\]
Although the potential is uniformly strongly convex, these weights may become negative because of the residual-dependent term.

Since
\[
L_\alpha(\theta)
\ge
\frac{\lambda}{2n}\|y-X\theta\|_2^2,
\]
the objective is coercive and attains a global minimum when \(X\) has full column rank. In the rank-deficient case, the same holds on \(\ker(X)^\perp\).

We now derive two complementary curvature bounds for the quartic objective. One is expressed in terms of the current residuals and gives a local sufficient condition; the other is expressed in terms of the
responses and yields an exact scalar characterization. For the first, completing the square gives
\[
w_i(\theta;\alpha)
=
\lambda
+
3\alpha\bigl(m_i(\theta)-r_i(\theta)\bigr)^2
-
3\alpha r_i(\theta)^2.
\]
Therefore,
\[
w_i(\theta;\alpha)
\ge
\lambda-3\alpha r_i(\theta)^2.
\]

For \(\alpha>0\), the pointwise condition
\(
\|r(\theta)\|_\infty
<
\sqrt{\frac{\lambda}{3\alpha}}
\)
is sufficient for all scalar weights to be positive, with
\[
\nabla^2L_\alpha(\theta)
\succeq
\left(
\lambda-3\alpha\|r(\theta)\|_\infty^2
\right)
\frac{X^\top X}{n}.
\]
Thus the Hessian is positive definite when \(X\) has full column rank, and its restriction to \(\ker(X)^\perp\) is positive definite otherwise. More generally, if
\[
\sup_{\theta\in\mathcal D}
\|r(\theta)\|_\infty
\le\rho
\qquad\text{and}\qquad
3\alpha\rho^2<\lambda,
\]
then
\[
\nabla^2L_\alpha(\theta)
\succeq
\left(
\lambda-3\alpha\rho^2
\right)
\frac{X^\top X}{n},
\qquad
\theta\in\mathcal D.
\]

A different rearrangement yields an exact scalar characterization. Using \(r_i=y_i-m_i\),
\[
w_i(\theta;\alpha)
=
\lambda-\alpha y_i^2
+
9\alpha
\left(
m_i(\theta)-\frac{y_i}{3}
\right)^2.
\]
For a fixed \(y\), let
\[
\ell_y(m)
=
D_{\phi_{\lambda,\alpha}}(y\|m).
\]

\begin{proposition}
\label{prop:quartic-scalar-convexity}
For \(\alpha>0\),
\[
\min_{m\in\mathbb{R}}\ell_y''(m)
=
\lambda-\alpha y^2.
\]
Consequently,
\[
\ell_y
\text{ is convex on }\mathbb{R}
\quad\Longleftrightarrow\quad
\alpha y^2\le\lambda.
\]
If \(\alpha y^2>\lambda\), then \(\ell_y''(m)<0\) exactly on
\[
m^-(y)<m<m^+(y),
\qquad
m^\pm(y)
=
\frac{y\pm\sqrt{y^2-\lambda/\alpha}}{3}.
\]
\end{proposition}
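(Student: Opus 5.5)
The plan is to compute \(\ell_y''\) in closed form and then read off all three claims from one quadratic expression in \(m\). Differentiating the explicit scalar divergence
\[
\ell_y(m)=\frac{\lambda}{2}(y-m)^2+\frac{\alpha}{4}\bigl(y^4-4m^3y+3m^4\bigr)
\]
twice in \(m\) gives
\[
\ell_y''(m)=\lambda-6\alpha my+9\alpha m^2 .
\]
This agrees with the scalar weight \(\phi''(m)-\phi'''(m)(y-m)\) from Section~\ref{sec:general-curvature}, evaluated with \(\phi''(m)=\lambda+3\alpha m^2\) and \(\phi'''(m)=6\alpha m\). Completing the square, exactly as in the rearrangement of \(w_i(\theta;\alpha)\) stated just before the proposition, yields
\[
\ell_y''(m)=\lambda-\alpha y^2+9\alpha\Bigl(m-\frac{y}{3}\Bigr)^2 .
\]

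The three claims then follow from this identity.

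First, for \(\alpha>0\) the quadratic term is nonnegative and vanishes only at \(m=y/3\). Hence the minimum of \(\ell_y''\) over \(\mathbb{R}\) is attained there and equals \(\lambda-\alpha y^2\).

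Second, since \(\ell_y\) is \(C^2\) on \(\mathbb{R}\), it is convex if and only if \(\ell_y''\ge0\) everywhere. By the first step this holds if and only if \(\lambda-\alpha y^2\ge0\), i.e. \(\alpha y^2\le\lambda\).

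Third, suppose \(\alpha y^2>\lambda\). Then \(\ell_y''(m)<0\) is equivalent to
\[
\Bigl(m-\frac y3\Bigr)^2<\frac{\alpha y^2-\lambda}{9\alpha}=\frac{y^2-\lambda/\alpha}{9}.
\]
Taking square roots, this says \(|m-y/3|<\sqrt{y^2-\lambda/\alpha}/3\), which is exactly the open interval \(m^-(y)<m<m^+(y)\). Its endpoints are the two real roots of the quadratic \(\ell_y''\).

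There is no genuine obstacle here. The only point needing care is the arithmetic of the square completion, \(9\alpha(m-y/3)^2=9\alpha m^2-6\alpha my+\alpha y^2\), so I would double-check the constant \(-\alpha y^2\). I would also note explicitly that the convexity equivalence uses the standard second-derivative characterization of convexity for \(C^2\) functions on an interval.
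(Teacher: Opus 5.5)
Your proposal is correct and follows essentially the same route as the paper. Both arguments rest on the completed-square identity \(\ell_y''(m)=\lambda-\alpha y^2+9\alpha\bigl(m-\frac{y}{3}\bigr)^2\) and read off the minimum at \(m=y/3\), the convexity threshold, and the negative-curvature interval directly from it; you are slightly more explicit in deriving \(\ell_y''\) from the divergence and in invoking the second-derivative characterization of convexity, which the paper leaves implicit.
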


\begin{proof}
The identity
\[
\ell_y''(m)
=
\lambda-\alpha y^2
+
9\alpha
\left(
m-\frac{y}{3}
\right)^2
\]
shows that the minimum is attained at \(m=y/3\). When \(\alpha y^2>\lambda\), solving \(\ell_y''(m)=0\) gives the stated endpoints, and the quadratic is negative between them.
\end{proof}

Applying the scalar lower bound to every observation gives
\[
w_i(\theta;\alpha)
\ge
\lambda-\alpha\|y\|_\infty^2.
\]
Hence
\[
\alpha\|y\|_\infty^2<\lambda
\]
is sufficient for
\[
\nabla^2L_\alpha(\theta)
\succeq
\left(
\lambda-\alpha\|y\|_\infty^2
\right)
\frac{X^\top X}{n},
\qquad
\theta\in\mathbb{R}^d.
\]
Thus \(L_\alpha\) is globally strongly convex when \(X\) has full column rank, and globally strongly convex on \(\ker(X)^\perp\) otherwise.

Both the residual-based and response-based conditions are sufficient at the regression level. Failure of the response-based condition does not imply that the full Hessian is indefinite, since positive contributions from other observations may compensate for negative scalar weights. At the boundary
\[
\alpha\|y\|_\infty^2=\lambda,
\]
the preceding argument yields only a positive-semidefinite lower bound. The full Hessian may nevertheless remain positive definite, since the scalar lower bounds need not be attained simultaneously and the remaining positive-weight design vectors may still span the parameter space.

We finally record an upper curvature bound on regions with bounded predictions. For \(\alpha>0\) and \(X\neq0\), the Hessian is unbounded
on \(\mathbb{R}^d\), so \(\nabla L_\alpha\) is not globally Lipschitz.
If
\[
|m_i(\theta)|\le B,
\qquad
i=1,\ldots,n,\quad \theta\in\mathcal D,
\]
then
\[
w_i(\theta;\alpha)
=
\lambda+9\alpha m_i(\theta)^2
-6\alpha y_i m_i(\theta)
\le
\lambda+9\alpha B^2+6\alpha B\|y\|_\infty.
\]
Consequently,
\[
\nabla^2L_\alpha(\theta)
\preceq
\left(
\lambda+9\alpha B^2+6\alpha B\|y\|_\infty
\right)
\frac{X^\top X}{n},
\qquad
\theta\in\mathcal D.
\]

Combined with either of the preceding lower bounds, this gives explicit strong convexity and smoothness constants on \(\mathcal D\). The conditional gradient-descent result of Section~\ref{sec:general-curvature} then applies as long as the iterates remain in this region.

\section{Continuation in the Quartic Parameter}
\label{sec:continuation}

As \(\alpha\) increases, the quartic objective moves away from least squares and may become more poorly conditioned or nonconvex. This suggests using the solution of an easier problem at a smaller value of
\(\alpha\) to initialize the next one. We therefore examine continuation in \(\alpha\) as a numerical warm-start strategy.

At \(\alpha=0\),
\[
L_0(\theta)
=
\frac{\lambda}{2n}\|y-X\theta\|_2^2,
\]
so the procedure starts from a least-squares solution. If \(X\) has full column rank, this solution is
\[
\theta^\star(0)
=
(X^\top X)^{-1}X^\top y;
\]
otherwise, we restrict the parameter space to \(\ker(X)^\perp\) and use the minimum-norm solution \(X^\dagger y\).

The analysis below is local. It describes when a stationary point varies smoothly under a small change in \(\alpha\), but it does not guarantee that the path extends to a prescribed target value.

To describe the local behavior of stationary points, observe that the gradient depends affinely on \(\alpha\):
\[
\nabla L_\alpha(\theta)
=
\nabla L_0(\theta)+\alpha G(\theta),
\qquad
G(\theta)
=
-\frac3n\sum_{i=1}^n
m_i(\theta)^2r_i(\theta)x_i.
\]
Hence
\[
\partial_\alpha\nabla L_\alpha(\theta)=G(\theta).
\]

\begin{proposition}
\label{prop:local-solution-path}
Let \(E=\ker(X)^\perp\), and let \(\alpha_0\ge0\) and \(\theta_0\in E\). Suppose that
\[
\nabla_E L_{\alpha_0}(\theta_0)=0
\]
and that the restricted Hessian
\[
\nabla_E^2L_{\alpha_0}(\theta_0):E\to E
\]
is invertible. Then there exists a neighborhood \(U\) of \(\alpha_0\) and a unique \(C^1\) map
\[
\theta^\star:U\cap[0,\infty)\to E
\]
such that
\[
\theta^\star(\alpha_0)=\theta_0,
\qquad
\nabla_E L_\alpha\bigl(\theta^\star(\alpha)\bigr)=0,
\qquad
\alpha\in U\cap[0,\infty).
\]
Along this local stationary path,
\[
\frac{d\theta^\star(\alpha)}{d\alpha}
=
-
\left[
\nabla_E^2L_\alpha
\bigl(\theta^\star(\alpha)\bigr)
\right]^{-1}
\partial_\alpha\nabla_E L_\alpha
\bigl(\theta^\star(\alpha)\bigr).
\]
\end{proposition}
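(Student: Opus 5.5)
The plan is to apply the implicit function theorem to the map
\[
F:\mathbb{R}\times E\to E,\qquad F(\alpha,\theta)=\nabla_E L_\alpha(\theta)=P_E\nabla L_\alpha(\theta),
\]
where \(P_E\) is the orthogonal projection onto \(E=\ker(X)^\perp\). Since \(\nabla L_\alpha(\theta)=-\frac1n X^\top(\cdots)\) lies in the range of \(X^\top\), which equals \(E\), we have \(P_E\nabla L_\alpha=\nabla L_\alpha\) in any case. The projection is therefore only a matter of identifying the codomain.

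First I will check regularity. Using the affine decomposition \(\nabla L_\alpha(\theta)=\nabla L_0(\theta)+\alpha G(\theta)\), each component is a polynomial in \(\theta\) and affine in \(\alpha\). Hence \(F\) is \(C^\infty\) on all of \(\mathbb{R}\times E\). This lets me define \(F\) for negative \(\alpha\) as well, so no boundary issue at \(\alpha_0=0\) arises. The partial derivatives are
\[
D_\theta F(\alpha,\theta)=P_E\nabla^2L_\alpha(\theta)|_E=\nabla_E^2L_\alpha(\theta),\qquad \partial_\alpha F=P_E G(\theta)=\partial_\alpha\nabla_E L_\alpha(\theta).
\]
By hypothesis, \(F(\alpha_0,\theta_0)=0\) and \(D_\theta F(\alpha_0,\theta_0)\) is an invertible linear map \(E\to E\).

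The implicit function theorem (in \(E\cong\mathbb{R}^{\dim E}\)) then produces an open interval \(U\ni\alpha_0\), a neighborhood \(V\) of \(\theta_0\) in \(E\), and a unique \(C^1\) map \(\theta^\star:U\to V\) with \(F(\alpha,\theta^\star(\alpha))=0\) and \(\theta^\star(\alpha_0)=\theta_0\). Restricting to \(U\cap[0,\infty)\) gives the stated map. Uniqueness should be read as uniqueness among continuous maps with \(\theta^\star(\alpha_0)=\theta_0\) (equivalently, zeros of \(F\) in \(V\)), and I will state it that way.

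For the derivative formula, I first shrink \(U\) so that \(D_\theta F(\alpha,\theta^\star(\alpha))\) remains invertible along the path; this uses continuity of the determinant. Differentiating the identity \(F(\alpha,\theta^\star(\alpha))=0\) by the chain rule gives
\[
\nabla_E^2L_\alpha(\theta^\star)\,\frac{d\theta^\star}{d\alpha}+\partial_\alpha\nabla_E L_\alpha(\theta^\star)=0.
\]
Inverting the restricted Hessian yields the claimed expression.

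The main obstacle is not analytic but bookkeeping. The step needing care is that the restricted Hessian is genuinely an operator \(E\to E\); this holds because \(\nabla^2 L_\alpha=\frac1nX^\top WX\) maps into \(\operatorname{range}(X^\top)=E\). The other point needing care is the treatment of the endpoint \(\alpha_0=0\), which the smooth extension to negative \(\alpha\) handles.
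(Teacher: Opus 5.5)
Your proposal is correct and follows essentially the same route as the paper. Both apply the implicit function theorem to $F=\nabla_E L_\alpha$ on $E\times\mathbb{R}$, use the polynomial dependence on $\alpha$ to extend past $\alpha_0=0$, and differentiate $F\bigl(\theta^\star(\alpha),\alpha\bigr)=0$ to obtain the formula. Your additional points fill in details the paper leaves implicit: that $\operatorname{range}(X^\top)=E$, that $U$ can be shrunk so the restricted Hessian stays invertible along the path, and the precise sense in which the path is unique.
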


\begin{proof}
Define
\[
F(\theta,\alpha)=\nabla_E L_\alpha(\theta),
\qquad
(\theta,\alpha)\in E\times\mathbb R.
\]
For the quadratic--quartic family, \(F\) is continuously differentiable in both variables. Its polynomial dependence on \(\alpha\) also defines \(F\) on an open neighborhood of \(\alpha_0\), including when \(\alpha_0=0\).

By assumption,
\[
F(\theta_0,\alpha_0)=0,
\qquad
\partial_\theta F(\theta_0,\alpha_0)
=
\nabla_E^2L_{\alpha_0}(\theta_0)
\]
is invertible. The implicit function theorem therefore yields a unique local \(C^1\) map \(\theta^\star(\alpha)\) satisfying
\[
\theta^\star(\alpha_0)=\theta_0,
\qquad
F\bigl(\theta^\star(\alpha),\alpha\bigr)=0.
\]

Differentiating this identity gives
\[
\nabla_E^2L_\alpha\bigl(\theta^\star(\alpha)\bigr)
\frac{d\theta^\star(\alpha)}{d\alpha}
+
\partial_\alpha\nabla_E L_\alpha
\bigl(\theta^\star(\alpha)\bigr)
=
0,
\]
which yields the stated formula.
\end{proof}

For the quadratic--quartic family,
\[
\partial_\alpha\nabla L_\alpha(\theta)
=
-\frac3n\sum_{i=1}^n
m_i(\theta)^2r_i(\theta)x_i.
\]
Since each \(x_i\in E\), this is also \(\partial_\alpha\nabla_E L_\alpha(\theta)\). Therefore,
\[
\frac{d\theta^\star(\alpha)}{d\alpha}
=
\frac3n
\left[
\nabla_E^2L_\alpha
\bigl(\theta^\star(\alpha)\bigr)
\right]^{-1}
\sum_{i=1}^n
m_i\bigl(\theta^\star(\alpha)\bigr)^2
r_i\bigl(\theta^\star(\alpha)\bigr)x_i.
\]

If the restricted Hessian is positive definite at \((\theta_0,\alpha_0)\), then continuity implies that nearby stationary points remain strict local minimizers on \(E\) for sufficiently small changes in \(\alpha\). Moreover,
\[
\left\|
\frac{d\theta^\star(\alpha)}{d\alpha}
\right\|_2
\le
\frac{
\left\|
\partial_\alpha\nabla_E L_\alpha
\bigl(\theta^\star(\alpha)\bigr)
\right\|_2
}{
\lambda_{\min}
\left(
\nabla_E^2L_\alpha
\bigl(\theta^\star(\alpha)\bigr)
\right)
},
\]
whenever the restricted Hessian is positive definite. Thus the local stationary path may become sensitive to \(\alpha\) when its smallest restricted Hessian eigenvalue is positive but close to zero.

This local result motivates the following warm-start procedure. Let
\[
0=\alpha_0<\alpha_1<\cdots<\alpha_K
=\alpha_{\mathrm{target}}.
\]
At each stage, the point computed for \(L_{\alpha_{k-1}}\) is used to initialize a local solver for \(L_{\alpha_k}\).

\begin{algorithm}[t]
\caption{Continuation warm start in \(\alpha\)}
\label{alg:alpha-continuation}
\begin{algorithmic}[1]

\REQUIRE Data \(X,y\), parameter \(\lambda>0\), schedule
\(0=\alpha_0<\cdots<\alpha_K\), tolerance \(\varepsilon>0\)

\STATE Compute a least-squares solution
\(\theta_0\in\arg\min_\theta L_0(\theta)\)

\FOR{\(k=1,\ldots,K\)}

    \STATE Initialize the local solver at \(\theta_{k-1}\)

    \STATE Apply the solver to \(L_{\alpha_k}\), and let \(\theta_k\)
    denote its final iterate

    \STATE Record
    \[
    \|\nabla_E L_{\alpha_k}(\theta_k)\|_2,
    \qquad
    \lambda_{\min}
    \bigl(\nabla_E^2L_{\alpha_k}(\theta_k)\bigr),
    \qquad
    \|r(\theta_k)\|_\infty
    \]

    \STATE Declare the stage successful if
    \[
    \|\nabla_E L_{\alpha_k}(\theta_k)\|_2\le\varepsilon
    \]

\ENDFOR

\RETURN \(\theta_K\)

\end{algorithmic}
\end{algorithm}

Proposition~\ref{prop:local-solution-path} suggests that the previously computed stationary point may provide a useful initialization when \(\alpha_k-\alpha_{k-1}\) is small and the restricted Hessian remains nonsingular. The Hessian spectrum provides a local diagnostic. At a stationary point, a positive smallest restricted eigenvalue implies a strict local minimum on \(E\), whereas a negative smallest eigenvalue rules out local minimality. A small eigenvalue in absolute value indicates proximity to singularity. The residual condition
\[
\|r(\theta_k)\|_\infty
<
\sqrt{\frac{\lambda}{3\alpha_k}}
\]
guarantees positivity of all scalar Hessian weights, although it is only a sufficient condition for positive definiteness of the full restricted Hessian.

If the local solve deteriorates or the Hessian approaches singularity, intermediate values of \(\alpha\) may be inserted into the schedule. This refinement is heuristic. An invertible but indefinite Hessian may define a stationary path without preserving local minimality, while at a singular Hessian the implicit function theorem no longer guarantees a locally unique path. The procedure therefore does not guarantee that a stationary path extends to \(\alpha_{\mathrm{target}}\), that local minimality is preserved, or that the final point is a global minimizer.

Under
\[
\alpha\|y\|_\infty^2<\lambda,
\]
the objective has a unique stationary point when \(X\) has full column rank, and a unique stationary point on \(E\) otherwise. Continuation is not required for existence or uniqueness in this certified regime. Beyond it, continuation is used only as a numerical warm-start strategy.

Section~\ref{sec:experiments} compares continuation with direct optimization using stationarity, the Hessian spectrum, and the residual condition as diagnostics.

\section{Numerical Experiments}
\label{sec:experiments}

This section examines the curvature conditions derived in Section~\ref{sec:quartic} and the continuation strategy introduced in Section~\ref{sec:continuation}, with emphasis on optimization behavior rather than predictive performance.

Unless stated otherwise, we set \(\lambda=1\). At each evaluation point, we record the Hessian
\[
H_\alpha(\theta)=\nabla^2L_\alpha(\theta),
\]
its smallest eigenvalue \(\lambda_{\min}(H_\alpha(\theta))\), the residual norm \(\|r(\theta)\|_\infty\), the gradient norm \(\|\nabla L_\alpha(\theta)\|_2\), and the minimum scalar weight
\[
w_{\min}(\theta)
=
\min_{1\le i\le n} w_i(\theta;\alpha).
\]
The minimum scalar weight is used to compare positivity of the individual weights with positive definiteness of the full Hessian. In experiments repeated over independent trials, we also report the empirical frequency
\[
\widehat{\mathbb P}
\left(
\lambda_{\min}(H_\alpha(\theta))<0
\right),
\]
since a positive average smallest eigenvalue does not imply positive definiteness in every trial.

The synthetic experiments use standardized Gaussian designs and independent random seeds. The number of trials is stated in each experiment. Quantities evaluated at approximately stationary points are averaged only over runs satisfying the prescribed gradient tolerance, whereas success rates are computed over all runs.

\subsection{Scalar loss geometry}
\label{subsec:scalar-experiment}

We first illustrate the scalar loss
\[
\ell_y(m)=D_{\phi_{\lambda,\alpha}}(y\|m).
\]
By Proposition~\ref{prop:quartic-scalar-convexity},
\[
\ell_y''(m)
=
\lambda-\alpha y^2
+
9\alpha\left(m-\frac{y}{3}\right)^2,
\]
so the transition between global scalar convexity and nonconvexity occurs at
\(
\alpha y^2=\lambda.
\)

We set \(\lambda=1\) and \(y=2\), for which
\(
\alpha_\star=\frac{\lambda}{y^2}=\frac14.
\)
Figure~\ref{fig:scalar-loss-geometry} plots \(\ell_y(m)\) and \(\ell_y''(m)\) for
\(
\alpha\in\{0.1,0.25,0.5\},
\)
corresponding to the subcritical, critical, and supercritical cases. In the supercritical case, the endpoints
\[
m^\pm(y)
=
\frac{y\pm\sqrt{y^2-\lambda/\alpha}}{3}
\]
are also shown.

\begin{figure}[htbp]
\centering
\includegraphics[width=0.95\textwidth]{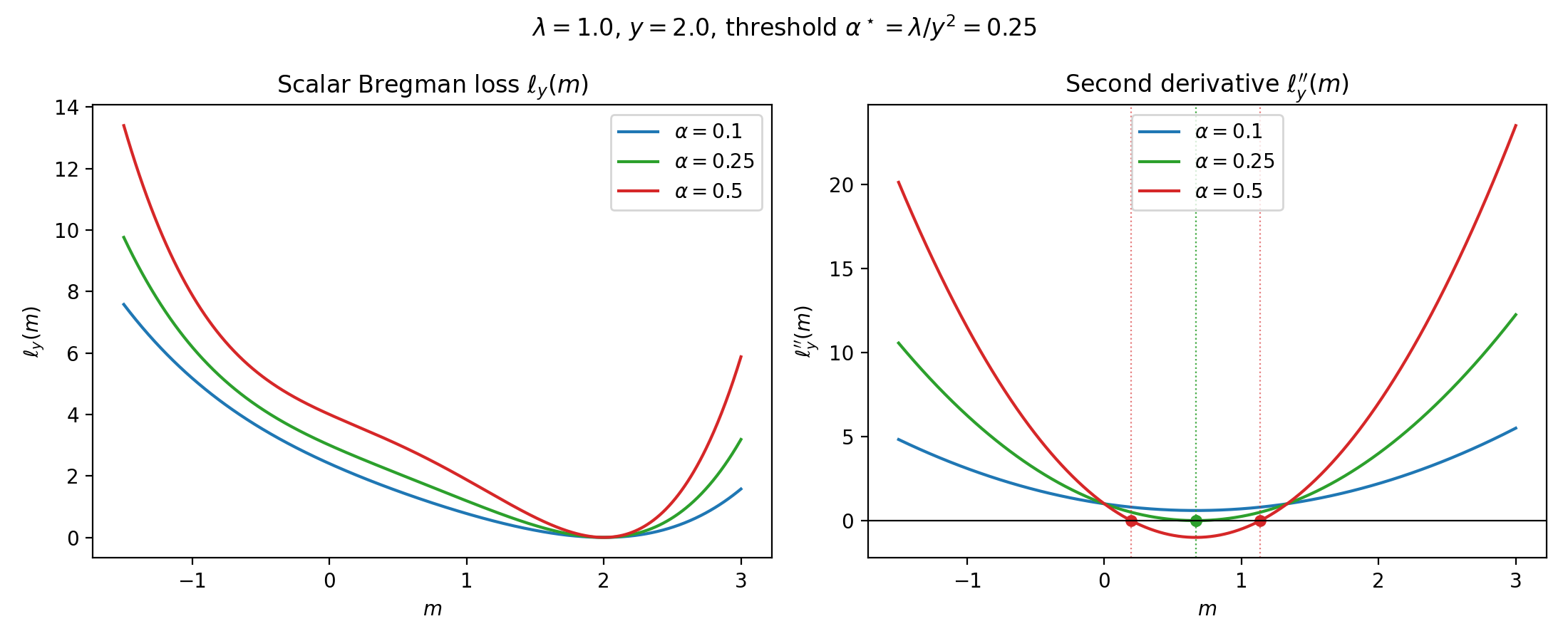}
\caption{Scalar quartic Bregman loss and its second derivative below,
at, and above the threshold \(\alpha y^2=\lambda\). In the
supercritical case, \(\ell_y''(m)\) is negative exactly on
\((m^-(y),m^+(y))\).}
\label{fig:scalar-loss-geometry}
\end{figure}

Below the threshold, \(\ell_y\) is globally strongly convex. At the threshold, it remains convex, with \(\ell_y''(y/3)=0\). Above the threshold, it has negative curvature exactly on \((m^-(y),m^+(y))\).

\subsection{Synthetic curvature map}
\label{subsec:synthetic-curvature}

We next compare the scalar sufficient conditions with the observed curvature of the full regression Hessian. We set \(n=200\) and \(d=30\), generate \(X\in\mathbb{R}^{n\times d}\) with independent standard Gaussian entries, and standardize its columns. The responses are generated as
\[
y=X\theta^\star+\sigma\varepsilon,
\qquad
\varepsilon\sim\mathcal N(0,I_n),
\]
where \(\theta^\star\) is a random unit vector. We vary \(\alpha\in[0,0.6]\) and \(\sigma\in[0.1,2.0]\) over an \(11\times11\) grid, using \(200\) independently generated instances at each grid point.

For each instance, curvature is evaluated both at the least-squares solution \(\theta_{\mathrm{LS}}\) and at a point \(\hat\theta_\alpha\) obtained by running L-BFGS-B from \(\theta_{\mathrm{LS}}\). A run is classified as approximately stationary when
\[
\|\nabla L_\alpha(\hat\theta_\alpha)\|_2
\]
is below the prescribed tolerance.

Figure~\ref{fig:phase-diagram} reports the mean smallest Hessian eigenvalue and the empirical probability
\[
\widehat{\mathbb P}
\bigl(
\lambda_{\min}(H_\alpha(\theta))<0
\bigr)
\]
over the tested grid. The zero contours of the mean global and residual lower bounds are included for comparison.

\begin{figure}[htbp]
\centering
\includegraphics[width=\textwidth]{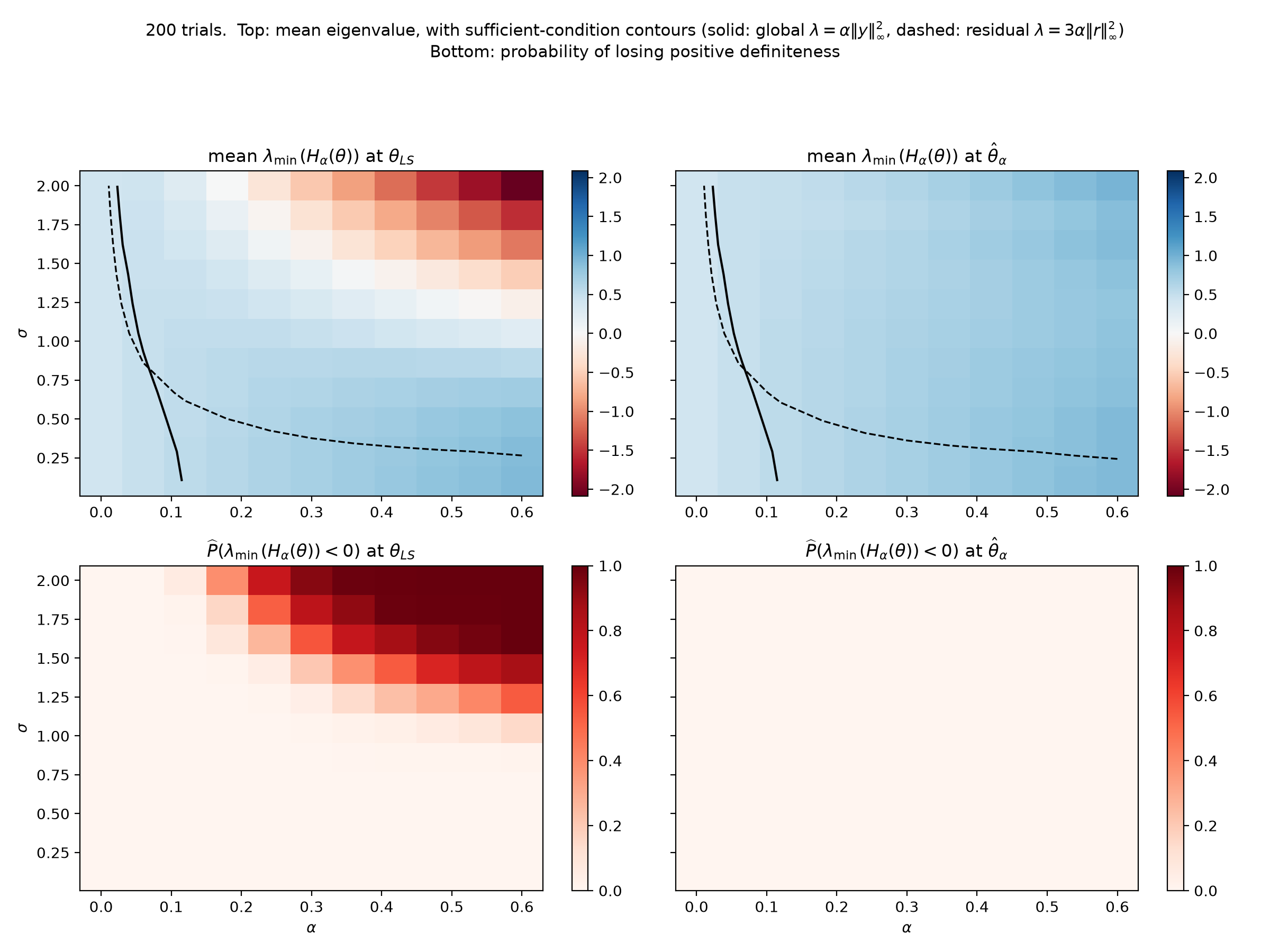}
\caption{Top: mean smallest Hessian eigenvalue over the tested
\((\alpha,\sigma)\)-grid at \(\theta_{\mathrm{LS}}\) and
\(\hat\theta_\alpha\), with the zero contours of the mean global and
residual sufficient bounds. Bottom: empirical probability of a
negative smallest Hessian eigenvalue
(\(n=200\), \(d=30\), \(200\) trials per grid point).}
\label{fig:phase-diagram}
\end{figure}

At \(\theta_{\mathrm{LS}}\), negative curvature becomes increasingly common as both \(\alpha\) and \(\sigma\) increase. The empirical probability first reaches \(0.5\) at \(\sigma=1.24\) and \(\alpha=0.6\), and the crossing occurs at smaller values of \(\alpha\) for larger noise levels. In contrast, no negative smallest eigenvalue was observed at \(\hat\theta_\alpha\) in any of the \(200\) trials at any tested grid point. For a fixed grid point, the rule of three gives an approximate pointwise upper confidence bound of \(3/200=1.5\%\); this is not a simultaneous bound over the full grid.

Table~\ref{tab:phase-variability} reports selected distributional summaries at \(\alpha=0.6\). At \(\theta_{\mathrm{LS}}\), the distribution changes from predominantly positive to predominantly negative as \(\sigma\) increases. At \(\hat\theta_\alpha\), the
empirical negative-eigenvalue probability is zero and the \(10\)th percentile remains positive at every reported noise level.

\begin{table}[!htbp]
\centering
\small
\setlength{\tabcolsep}{4pt}
\begin{tabular}{rlrrrrr}
\hline
\(\sigma\) & Evaluation point & Mean & SE & Median & \(q_{10}\)
& \(\widehat{\mathbb P}(\mathrm{neg})\) \\
\hline
0.10 & \(\theta_{\mathrm{LS}}\)      & 0.924  & 0.006 & 0.928  & 0.824  & 0.000 \\
0.10 & \(\hat\theta_\alpha\)         & 0.925  & 0.006 & 0.932  & 0.826  & 0.000 \\
0.86 & \(\theta_{\mathrm{LS}}\)      & 0.562  & 0.014 & 0.584  & 0.344  & 0.015 \\
0.86 & \(\hat\theta_\alpha\)         & 0.863  & 0.011 & 0.867  & 0.669  & 0.000 \\
1.24 & \(\theta_{\mathrm{LS}}\)      & \(-0.115\) & 0.029 & \(-0.026\) & \(-0.693\) & 0.540 \\
1.24 & \(\hat\theta_\alpha\)         & 0.824  & 0.017 & 0.815  & 0.508  & 0.000 \\
1.62 & \(\theta_{\mathrm{LS}}\)      & \(-1.093\) & 0.050 & \(-0.931\) & \(-2.078\) & 1.000 \\
1.62 & \(\hat\theta_\alpha\)         & 0.910  & 0.023 & 0.910  & 0.521  & 0.000 \\
2.00 & \(\theta_{\mathrm{LS}}\)      & \(-2.085\) & 0.068 & \(-1.917\) & \(-3.370\) & 1.000 \\
2.00 & \(\hat\theta_\alpha\)         & 0.975  & 0.028 & 0.960  & 0.527  & 0.000 \\
\hline
\end{tabular}
\caption{Selected summaries of
\(\lambda_{\min}(H_\alpha(\theta))\) over \(200\) trials at
\(\alpha=0.6\). Here \(q_{10}\) denotes the \(10\)th percentile and
\(\widehat{\mathbb P}(\mathrm{neg})\) the empirical probability of a
negative smallest eigenvalue.}
\label{tab:phase-variability}
\end{table}
\FloatBarrier

These results show that failure of the scalar sufficient conditions can occur well before negative curvature is typically observed in the full Hessian at the evaluated points. They also indicate a substantial difference between curvature at the least-squares initialization and at the approximately stationary point returned by local optimization.

\subsection{Gradient-descent stability}
\label{subsec:gd-stability}

We next examine how the quartic parameter \(\alpha\) interacts with the gradient-descent step size \(\eta\). Using \((n,d)=(200,30)\) and fixing \(\sigma=0.5\), we run
\[
\theta_{t+1}
=
\theta_t-\eta\nabla L_\alpha(\theta_t)
\]
from \(\theta_0=\theta_{\mathrm{LS}}\) over an \(11\times11\) grid with \(\alpha\in[0,0.6]\) and logarithmically spaced \(\eta\in[0.01,1]\). Each configuration is repeated over \(200\) independent trials for at most \(2000\) iterations.

A run is classified as converged if
\[
\|\nabla L_\alpha(\theta_t)\|_2\le10^{-4}
\]
within the iteration limit, divergent if a nonfinite value appears or \(\|\theta_t\|_2>10^6\), and stalled otherwise. We report the empirical fraction of converged trials at each grid point.

Figure~\ref{fig:gd-stability} shows the resulting convergence probabilities together with examples of converged, stalled, and divergent trajectories. For each displayed trajectory, it also reports the objective value, the smallest Hessian eigenvalue, and the maximum residual norm.

\begin{figure}[htbp]
\centering
\includegraphics[width=\textwidth]{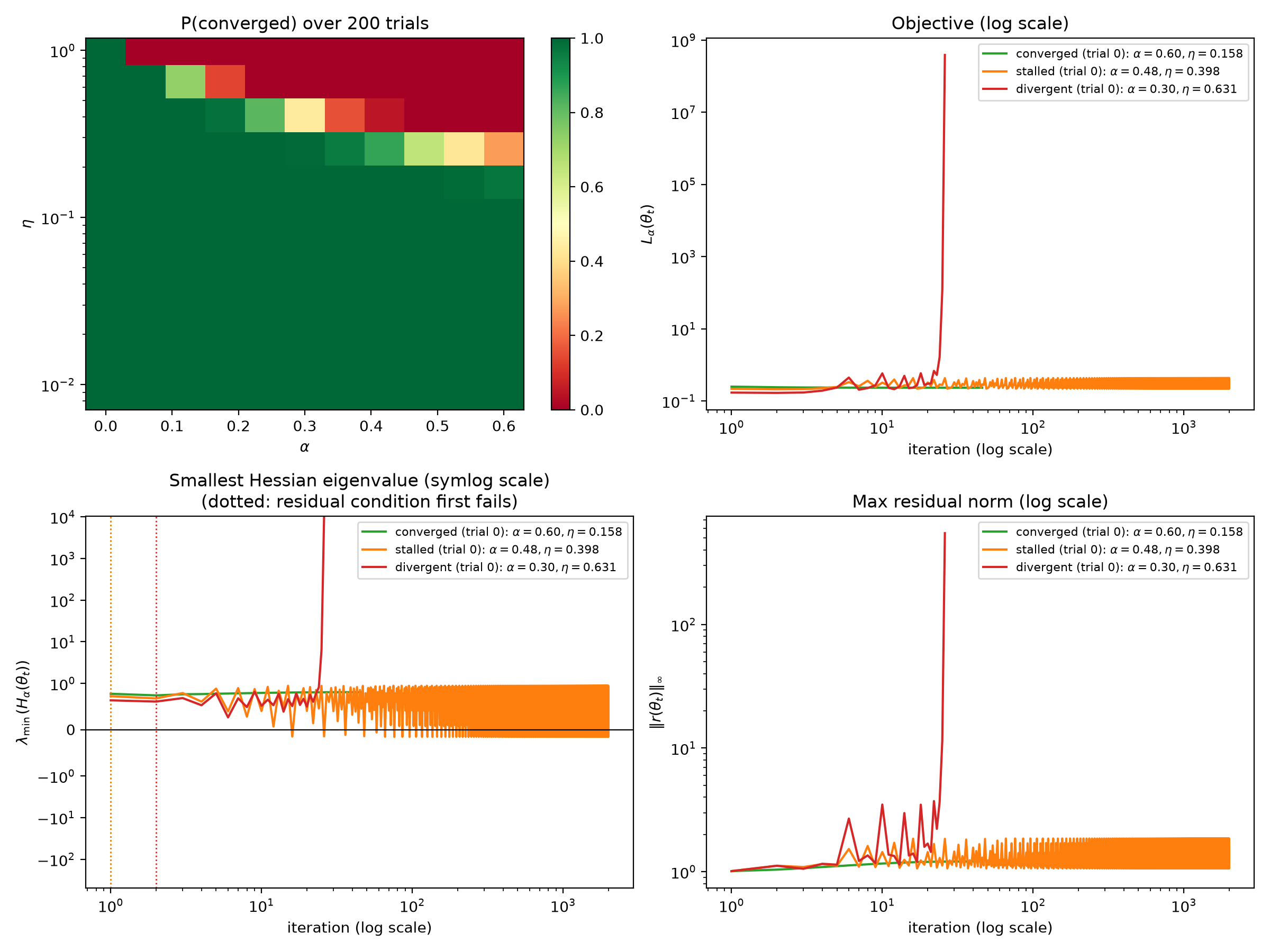}
\caption{Empirical gradient-descent convergence probability over the
tested \((\alpha,\eta)\)-grid, together with examples of converged,
stalled, and divergent trajectories
(\(200\) trials per grid point).}
\label{fig:gd-stability}
\end{figure}

Table~\ref{tab:gd-threshold} reports, for each \(\alpha\), the largest
tested step size for which the empirical convergence probability is at
least \(0.5\). This value decreases from \(1\) at \(\alpha=0\) to
\(0.159\) at \(\alpha=0.6\).

\begin{table}[htbp]
\centering
\begin{tabular}{cc}
\hline
\(\alpha\)
&
Largest tested \(\eta\) with
\(\widehat{\mathbb P}(\mathrm{converge})\ge0.5\)
\\
\hline
0.00 & 1.000 \\
0.12 & 0.631 \\
0.24 & 0.398 \\
0.36 & 0.251 \\
0.48 & 0.251 \\
0.60 & 0.159 \\
\hline
\end{tabular}
\caption{Largest tested step size for which the empirical convergence
probability is at least \(0.5\), based on \(200\) trials per grid point.
Selected values of \(\alpha\) are shown. This finite-grid quantity is
not an exact stability boundary.}
\label{tab:gd-threshold}
\end{table}

The decrease in the tested threshold is qualitatively consistent with the upper curvature bound in Section~\ref{sec:quartic}, which increases with \(\alpha\) on a fixed bounded prediction region. The experiment does not identify an exact admissible step-size boundary, since the curvature varies along the trajectory and the hypotheses of
Corollary~\ref{thm:gradient-descent-convergence} need not hold globally.

The displayed trajectories also show that instability may occur under different local curvature patterns. Some trajectories oscillate or diverge before entering a region of negative curvature, whereas others reach points where
\[
\lambda_{\min}\bigl(H_\alpha(\theta_t)\bigr)\le0.
\]
The residual condition typically fails earlier and is therefore used only as a sufficient diagnostic, not as an exact stability boundary.

\subsection{Direct optimization versus continuation}
\label{subsec:continuation-experiment}

We compare direct optimization at a target value \(\alpha_{\mathrm{target}}\) with the continuation warm-start procedure of Section~\ref{sec:continuation}. The synthetic experiment uses \(n=150\), \(d=20\), \(\sigma=0.5\), and \(200\) independent trials. For each trial, we set
\[
\alpha_{\mathrm{target}}
=
c\,\alpha_{\mathrm{glob}},
\qquad
c\in\{0.5,1,2\},
\]
where
\[
\alpha_{\mathrm{glob}}
=
\frac{\lambda}{\|y\|_\infty^2}
\]
is computed separately for that trial. These choices lie below, at, and above the scalar sufficient threshold, but need not define distinct curvature regimes for the full Hessian.

Continuation uses \(K=5\) equally spaced values between \(0\) and \(\alpha_{\mathrm{target}}\), with L-BFGS-B initialized from the point computed at the preceding stage. Direct optimization uses the same solver initialized at \(\theta_{\mathrm{LS}}\).

A run is classified as approximately stationary when its final gradient norm is at most the prescribed tolerance. Objective values, gradient norms, iteration counts, function evaluations (\texttt{nfev}), and smallest Hessian eigenvalues are reported as mean \(\pm\) one standard deviation over approximately stationary runs.

\begin{table}[htbp]
\centering
\footnotesize
\setlength{\tabcolsep}{3pt}
\begin{tabular}{llcrrrrr}
\hline
Regime & Method & Stat. & Objective
& Gradient\(^\dagger\) & Iterations & \texttt{nfev}
& \(\lambda_{\min}(H)\) \\
\hline
below    & direct       & 1.000 & \(0.129\pm0.017\) & \(1.5\pm0.9\)
& \(7.4\pm0.7\)  & \(9.5\pm0.7\)  & \(0.518\pm0.044\) \\
below    & continuation & 0.990 & \(0.129\pm0.017\) & \(2.1\pm1.8\)
& \(28.5\pm2.6\) & \(38.7\pm2.7\) & \(0.518\pm0.044\) \\
boundary & direct       & 1.000 & \(0.146\pm0.020\) & \(1.6\pm1.1\)
& \(8.3\pm0.7\)  & \(10.4\pm0.9\) & \(0.574\pm0.054\) \\
boundary & continuation & 0.970 & \(0.146\pm0.020\) & \(2.1\pm1.7\)
& \(32.2\pm2.7\) & \(42.4\pm2.7\) & \(0.574\pm0.054\) \\
above    & direct       & 0.990 & \(0.179\pm0.028\) & \(2.2\pm1.5\)
& \(9.4\pm0.8\)  & \(11.6\pm0.9\) & \(0.673\pm0.074\) \\
above    & continuation & 0.980 & \(0.178\pm0.027\) & \(2.2\pm1.2\)
& \(36.3\pm2.6\) & \(47.0\pm2.9\) & \(0.672\pm0.075\) \\
\hline
\end{tabular}
\caption{Direct optimization and continuation on synthetic data over
\(200\) independent trials. The regimes are defined relative to the
scalar sufficient threshold
\(\alpha_{\mathrm{glob}}=\lambda/\|y\|_\infty^2\).
Here \emph{Stat.} denotes the approximately stationary rate. Objective
values, gradient norms, iteration counts, function evaluations, and
Hessian eigenvalues are averaged over approximately stationary runs.
Every approximately stationary output has a positive smallest Hessian
eigenvalue. \(^\dagger\)Gradient norms are reported in units of
\(10^{-5}\).}
\label{tab:direct-vs-continuation-synthetic}
\end{table}

Conditional on approximate stationarity, the two methods give nearly identical objective values and smallest Hessian eigenvalues in all three regimes. This aggregate agreement does not imply that they reach the same stationary point in every trial. Their stationarity rates differ by only \(0.01\) to \(0.03\). Since no paired test of the binary stationarity outcomes is performed, these differences are not interpreted as evidence of a systematic reliability difference.

The clear difference is computational cost. Continuation requires approximately three to four times as many iterations and function evaluations because it solves several intermediate problems. Thus the experiment provides no evidence of a computational advantage for continuation in the tested instances. Possible benefits under stronger nonconvexity or different initializations are not evaluated here.

\paragraph{Real-data comparison.}

We repeat the comparison on the California Housing and Diabetes datasets from \texttt{scikit-learn}. For each dataset, we use \(5\) random \(80/20\) train--test splits. Features and responses are standardized using training-set statistics, and an intercept is added after feature standardization. For each training split, we set
\[
\alpha_{\mathrm{target}}
=
\alpha_{\mathrm{glob}}
\]
and report test RMSE in the original response units. Values are reported as mean \(\pm\) one standard deviation over the \(5\) splits. The purpose of this experiment is to compare the two optimization procedures, not to assess a predictive advantage of the quartic loss.

\begin{table}[htbp]
\centering
\footnotesize
\setlength{\tabcolsep}{3pt}
\begin{tabular}{llrrrrr}
\hline
Dataset & Method & Gradient & Iterations & \texttt{nfev}
& \(\lambda_{\min}(H)\) & RMSE \\
\hline
California & direct
& \((1.3\pm0.7)\times10^{-5}\) & \(19.8\pm2.3\) & \(24.4\pm3.4\)
& \(0.0816\pm0.0012\) & \(0.735\pm0.004\) \\
California & continuation
& \((2.7\pm2.7)\times10^{-5}\) & \(80.2\pm4.7\) & \(103.2\pm8.0\)
& \(0.0816\pm0.0012\) & \(0.735\pm0.004\) \\
Diabetes & direct
& \((2.4\pm2.4)\times10^{-5}\) & \(17.2\pm3.0\) & \(21.2\pm3.0\)
& \(0.0134\pm0.0004\) & \(55.86\pm1.92\) \\
Diabetes & continuation
& \((1.6\pm0.4)\times10^{-5}\) & \(84.8\pm6.1\) & \(99.8\pm4.4\)
& \(0.0134\pm0.0004\) & \(55.86\pm1.92\) \\
\hline
\end{tabular}
\caption{Direct optimization and continuation on two real regression
datasets over \(5\) random train--test splits. Values are reported as
mean \(\pm\) one standard deviation. All runs satisfy the stationarity
tolerance, and test RMSE is reported in the original response units.}
\label{tab:direct-vs-continuation-real}
\end{table}

The real-data results show the same computational pattern as the synthetic experiment. Direct optimization and continuation give the same smallest Hessian eigenvalues and test RMSE to the reported precision, while continuation requires approximately four to five times as many iterations and function evaluations.

At
\[
\alpha_{\mathrm{target}}=\alpha_{\mathrm{glob}},
\]
the response-based lower bound is only positive semidefinite. Nevertheless, every approximately stationary output has a positive smallest Hessian eigenvalue and is therefore a strict local minimum. This is only a local certificate and does not establish global optimality. With \(5\) splits per dataset, the experiment is not intended to resolve small differences in reliability.

\section{Discussion and Conclusion}
\label{sec:conclusion}

This paper studies the gap between convexity of a generating potential and convexity of the associated reverse Bregman regression objective. When the prediction enters the second argument of the divergence, the Hessian depends on both the residuals and the design matrix.

For the quadratic--quartic family, we derived residual-based and response-based sufficient conditions for positive curvature, together with an exact scalar characterization of the negative-curvature region. These conditions are not necessary at the regression level, since the full Hessian combines the scalar weights through the design vectors. The numerical experiments illustrate this distinction: the scalar conditions may fail while the Hessian at the computed approximately stationary point remains positive definite.

The gradient-descent experiments indicate that the range of tested step sizes leading to convergence decreases as the quartic parameter increases. Continuation from least squares gives a natural warm start, but its theoretical justification is only local. In the tested synthetic and real-data settings, continuation produced diagnostics similar to direct optimization while requiring more iterations and function evaluations.

A natural next step is to develop sharper, design-dependent curvature conditions based directly on
\[
X^\top W(\theta)X
\]
rather than on the individual scalar weights. Other open questions include the statistical role and data-driven selection of the quartic parameter, extensions to nonlinear prediction maps, and continuation under stronger nonconvexity.

Overall, the quadratic--quartic model provides a simple setting in which the interaction among the potential, residuals, and design matrix can be analyzed explicitly.

\section*{Acknowledgments}

The author acknowledges the use of ChatGPT and Claude AI for editorial assistance and limited support during the preparation of numerical experiments. All experimental designs, numerical results, and source code were independently checked and validated by the author. The author assumes full responsibility for the originality, correctness, and final content of the manuscript.

\bibliographystyle{plainnat}
\bibliography{references}

\end{document}